\newtheorem{lemma}{Lemma}
\newtheorem{theorem}[lemma]{Theorem}
\newtheorem{prob}[lemma]{Question}
\theoremstyle{remark}
\newcommand{\R}{\mathbb{R}}
\newcommand{\abs}[1]{\lvert #1 \rvert}
\newcommand{\intprod}[1]{\left\langle #1 \right\rangle}
\title{On the orthogonal Grünbaum partition problem in dimension three}
\author[G.L. Maldonado]{Gerardo L. Maldonado}
\address[G.L. Maldonado]{Centro de Ciencias Matemáticas, UNAM Campus Morelia, Morelia, Mexico}
\email{gmaldonado@matmor.unam.mx}
\author[E. Roldán-Pensado]{Edgardo Roldán-Pensado}
\address[E. Roldán-Pensado]{Centro de Ciencias Matemáticas, UNAM Campus Morelia, Morelia, Mexico}
\email{e.roldan@im.unam.mx}
\keywords{Measure equipartitions; Orthogonal planes; Grünbaum partition problem}
\subjclass[2020]{52C35, 52-08}
\begin{document}
	
	\begin{abstract}
		Grünbaum's equipartition problem asked if for any measure $\mu$ on $\R^d$ there are always $d$ hyperplanes which divide $\R^d$ into $2^d$ $\mu$-equal parts. This problem is known to have a positive answer for $d\le 3$ and a negative one for $d\ge 5$. A variant of this question is to require the hyperplanes to be mutually orthogonal. This variant is known to have a positive answer for $d\le 2$ and there is reason to expect it to have a negative answer for $d\ge 3$. In this note we exhibit measures that prove this.
		Additionally, we describe an algorithm that checks if a set of $8n$ in $\R^3$ can be split evenly by $3$ mutually orthogonal planes. To our surprise, it seems the probability that a random set of $8$ points chosen uniformly and independently in the unit cube does not admit such a partition is less than $0.001$.
	\end{abstract}
	
	\maketitle
	
	\section{Introduction}
	Let $\mu$ be a finite Borel measure on $\R^d$, we say that $\mu$ is a mass if it vanishes on every affine hyperplane.	
	A collection of hyperplanes $H_1,H_2,\dots,H_k\subset\R^d$ define an \emph{equipartition} of a mass $\mu$ if these hyperplanes divide $\R^d$ into $2^k$ parts of equal $\mu$-measure. The problem of determining the values of $d$ for which any mass has an equipartition given by $d$ hyperplanes was proposed by Grünbaum in 1960 \cite{Gru60} and is a special case of the Grünbaum-Hadwiger-Ramos mass partition problem \cite{RS2022,BFHZ2018}.
	Grünbaum's problem has an affirmative answer for $d\le 3$ \cite{Had66,YDEP89}, a negative one for $d\ge 5$ \cite{Avi84} and is unsolved for $d=4$.
	
	We are interested in a variant of Grünbaum's problem where an additional condition is imposed, namely, that the hyperplanes defining the equipartition be mutually orthogonal. In this way, we say that the hyperplanes $H_1,H_2,\dots,H_k\subset\R^d$ define an \emph{orthogonal equipartition} of $\mu$ if these hyperplanes are mutually orthogonal and divide $\R^d$ into $2^k$ parts of equal $\mu$-measure.
	\begin{prob}\label{prob:measure}
		Is it true that every mass $\mu$ on $\R^d$ has an orthogonal equipartition defined by $d$ hyperplanes?
	\end{prob}
	
	The purpose of this paper is to answer this question negatively for $d\ge 3$. It is well known that the answer is affirmative for $d\le 2$ and, thanks to Avis' work \cite{Avi84}, we know it to be negative for $d\ge 5$ even if the hyperplanes are not required to be pairwise orthogonal.
	\begin{theorem}\label{thm:measure}
		For every $d\ge 3$ there exists a mass on $\R^d$ that has no orthogonal equipartition defined by $d$ planes.
	\end{theorem}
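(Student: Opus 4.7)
The plan is to build the counterexample from a finite cloud of sharply concentrated blobs whose combinatorial arrangement obstructs orthogonal equipartition. The key geometric fact is that any $d$ mutually orthogonal hyperplanes meet any line $L\subset\R^d$ in at most $d$ points, and hence partition $L$ into at most $d+1$ open segments; consequently, any $d+2$ collinear points must have two members lying in a common open orthant of any orthogonal partition.

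Concretely, for each $d\ge 3$ I would take $2^d$ points in $\R^d$, with $d+2$ of them on a line $L$ and the remaining $2^d-d-2$ in general position off $L$, and replace each point by the uniform probability measure on a ball of radius $\epsilon$ to obtain a mass $\mu_\epsilon$ of total mass $2^d$. An orthogonal equipartition of $\mu_\epsilon$ would require every orthant to carry mass exactly $1$. For every orthogonal frame in which no ball straddles a cutting plane, each orthant's mass equals the number of balls it contains, and the pigeonhole observation above forces some orthant to contain at least two of the $d+2$ collinear balls, giving mass $\ge 2$, a contradiction.

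The principal obstacle lies in the \emph{degenerate frames}, where some ball straddles a cutting plane. The most delicate case is when $L$ is parallel to one of the axes of the frame: for rotationally symmetric collinear balls alone, a direct computation shows that the cuts $c_2,\dots,c_d$ can be aligned with the common projection of the centers onto $L^\perp$ so as to produce a spurious equipartition exploiting the balls' symmetry. To rule this out I would choose the $2^d-d-2$ off-line points in sufficiently general position so that the equipartition system at any such degenerate frame becomes an over-determined set of $2^d-1$ scalar equalities in at most $\binom{d-1}{2}+d$ real parameters (a rotation within $L^\perp$ together with the $d$ cut positions), inconsistent for a generic off-line configuration. A compactness argument over $O(d)$ combined with continuity of the octant-mass functional would then produce, for small enough $\epsilon$, a mass with no orthogonal equipartition.

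For $d\ge 5$ the same conclusion follows at once from Avis's construction \cite{Avi84} of a mass on $\R^d$ that admits no unconstrained equipartition, since an orthogonal equipartition is a fortiori an equipartition; the substantive new content of Theorem~\ref{thm:measure} is therefore concentrated in the cases $d=3$ and $d=4$, where the hands-on treatment of the degenerate frames is unavoidable.
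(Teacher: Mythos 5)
Your non-degenerate argument is sound: if no cutting plane meets any of the $d+2$ collinear blobs, then the line is split into at most $d+1$ arcs, two collinear blobs share an open orthant, and that orthant has mass $\ge 2$. But the entire difficulty of the theorem is concentrated in what you call the degenerate frames, and there your proposal has a genuine gap. You dispose of them by asserting that, for generic off-line points, the equipartition conditions form an over-determined system ($2^d-1$ equations in $\binom{d-1}{2}+d$ parameters) and are therefore inconsistent. Over-determinedness does not imply inconsistency, and this is precisely the dimension-counting heuristic that the paper flags as mere intuition rather than proof; worse, the paper's own experiments show the heuristic is unreliable in this exact setting (random $8$-point sets in $\R^3$ face $7$ constraints against $6$ degrees of freedom, yet more than $99.9\%$ of them \emph{do} admit orthogonal equipartitions, because for concentrated or atomic mass the conditions are largely combinatorial and open rather than transversal equalities). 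Your own construction illustrates the danger: as you note, the collinear balls by themselves \emph{do} admit a spurious equipartition (two orthogonal planes through the line $L$ quarter every ball, and a third plane through the middle ball's center gives each octant exactly $5/8$ for $d=3$). To break this you must rule out every way the off-line blobs can be sliced so as to repair the octant counts, for every combinatorial type of degenerate frame: frames with an axis parallel to $L$, frames whose planes pass transversally through one or more collinear blobs, frames slicing off-line blobs along one plane, along an intersection line of two planes, etc. Each type yields a different mixed system of equalities and strict inequalities, and ``inconsistent for generic positions'' has to be proved case by case (e.g., by showing the required coincidences force non-generic algebraic relations among the off-line points). None of that analysis is present, and without it the proof does not go through for $d=3$ or $d=4$; for $d\ge 5$ you correctly defer to Avis, but those cases were already known.

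For comparison, the paper avoids all of this by supporting the measure on a tail $\Gamma=\{\gamma(t):t\ge M_d\}$ of the moment curve. An equipartition forces $\Gamma$ to cross the union of the $d$ hyperplanes at least $2^d-1$ times, while algebraic identities about the moment curve cap the number of crossings: a lemma on orthonormal bases shows some normal $u_1$ has $\intprod{u_1,e_d}\ge 1/\sqrt d$, which for large $M_d$ forces $H_1$ to meet $\Gamma$ at most once; each other hyperplane meets the curve in at most $d$ points, giving at most $1+d(d-1)<2^d-1$ crossings for $d\ge 4$; and for $d=3$ a sign argument (normals to planes triply meeting $\Gamma$ have the form $(1,-y,z)$ with $y,z>0$, so two such normals cannot be orthogonal) reduces the count to $6<7$. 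The degeneracy issues that sink your blob construction simply never arise there, because the obstruction is exact and algebraic rather than generic.
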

	
	From a topological point of view, this result is expected. This is because the space of triples consisting of mutually orthogonal planes in $\R^3$ has dimension $6$ but, in order for such a triple to define an orthogonal equipartition of $\mu$, $7$ independent constraints must be satisfied. Since the number of constraints is larger that the dimension of the space, no triple of mutually orthogonal planes should satisfy all the constraints for some \emph{generic} measure $\mu$.
	
	Some of the main ideas used to prove Theorem \ref{thm:measure} are similar to the ones used by Avis. Both in his and our approach, the measures constructed are supported on the moment curve. In Section \ref{sec:pre} we present the proof of Theorem \ref{thm:measure} along with the necessary tools and definitions. We also briefly discuss the implications of our results for the orthogonal version of the Grünbaum-Hadwiger-Ramos problem.
	
	There are other interesting results involving mass partitions with orthogonal hyperplanes. For example, if $d\in\{2,4,8\}$, then any two masses in $\R^d$ can be simultaneously bisected by $d-1$ mutually orthogonal hyperplanes \cite{Sim2018}.
	
	Additionally, we examine an equivalent problem in which the measures are replaced by point sets. Given a set of $8n$ points in $\R^3$, an \emph{orthogonal equipartition} of $X$ is a partition of $X$ into $8$ sets consisting of $n$ points each which can be separated by using $3$ mutually orthogonal planes.	
	In Section \ref{sec:algo} we describe an algorithm that is capable of finding all orthogonal equipartitions $X$.
	\begin{theorem} \label{thm:time}
		Let $X$ be a set of $8n$ points in $\R^3$, then the set of all orthogonal equipartitions of $X$ can be found in time $O(n^7)$.
	\end{theorem}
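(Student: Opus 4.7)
The plan is to enumerate a discrete collection of ``critical'' candidate equipartitions and verify each. I would parameterize an orthogonal triple of planes by $u_1\in S^2$, $u_2\in S^1\subset u_1^\perp$, $u_3=u_1\times u_2$, together with offsets $c_1,c_2,c_3\in\R$, giving a $6$-dimensional parameter space. The key observation is that the combinatorial type of the partition of $X$ by the planes $H_i=\{x : \intprod{x,u_i}=c_i\}$ is locally constant and changes only when some $H_i$ crosses some $p\in X$. Thus the $8$ octant counts are constant on each cell of the arrangement of ``walls'' $\intprod{p,u_i}=c_i$, one per pair $(i,p)$ with $i\in\{1,2,3\}$ and $p\in X$; an orthogonal equipartition is a cell on which all eight counts equal $n$.

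In this parameterization the first plane has $3$ remaining degrees of freedom, the second has $2$, and the third has $1$. Hence a $0$-dimensional cell of the arrangement is obtained, up to relabeling of the three planes, by fixing $3$ points on $H_1$, $2$ points on $H_2$ and $1$ point on $H_3$; no other distribution of $6$ incidences generically yields a $0$-dimensional intersection. This gives $O(n^3)\cdot O(n^2)\cdot O(n)=O(n^6)$ combinatorial choices. For each, the frame and offsets are recovered in $O(1)$ time: $H_1$ is the affine span of its three points; $u_2$ is a unit multiple of $u_1\times(q_1-q_2)$, where $q_1,q_2$ lie on $H_2$, and $c_2=\intprod{q_1,u_2}$; finally $u_3=u_1\times u_2$ and $c_3=\intprod{r,u_3}$, where $r$ is the point on $H_3$. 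Each candidate is verified by counting octant sizes in $O(n)$ time, for a total of $O(n^6)\cdot O(n)=O(n^7)$.

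What remains is to argue that every orthogonal equipartition is witnessed by at least one such vertex. The relevant parameter space is compact in the relevant directions: $SO(3)$ is compact, and the offsets lie in $[-M,M]$ where $M=\max_{p\in X}\abs{p}$. Hence any cell $C$ on which all octant counts equal $n$ has bounded closure $\overline{C}$, and a standard arrangement-theoretic argument guarantees that $\overline{C}$ contains a $0$-cell. Interpreting octant counts weakly (a point lying on a plane may be assigned to either side), each such vertex is itself a closed-sense equipartition and appears among the enumerated candidates. The hard part will be this completeness step in the presence of degeneracies: when more than six walls meet at a point, or when a putative $(3,2,1)$ pattern degenerates (e.g.\ collinear or coplanar points), one must argue via a symbolic perturbation of $X$ to a generic position so that the enumeration still captures every combinatorial equipartition.
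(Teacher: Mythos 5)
Your overall architecture matches the paper's: enumerate finitely many candidate triples of orthogonal planes determined by $6$ incidences with $X$, recover each triple in $O(1)$, verify octant counts in $O(n)$, and argue completeness by perturbing any certifying triple until it picks up $6$ incidences. The $(3,2,1)$ branch, including the $O(1)$ reconstruction of the frame, is exactly the paper's \textsc{planes\_321} step, and the $O(n^6)\cdot O(n)=O(n^7)$ count is the same.

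However, there is a genuine gap: your claim that ``no other distribution of $6$ incidences generically yields a $0$-dimensional intersection'' is false, and it causes your enumeration to be incomplete. The degrees-of-freedom argument behind it treats the parameter space as if each plane had a private budget of $3$, $2$, $1$ dimensions consumed only by incidences on that plane; but the constraints are coupled through the orthogonality of the frame (a point-pair on $H_2$ constrains $u_1$ as well, since $u_2$ ranges over $u_1^\perp$). In particular the distribution $(2,2,2)$ imposes $6$ independent conditions on the $6$-dimensional space and generically has \emph{isolated} solutions: the paper devotes most of Section \ref{sec:algo} to computing them, by solving a system of three quadratic equations whose solutions \eqref{eq:alpha} have coefficients in $\mathbb{Q}[\sqrt{q}]$ (the \textsc{planes\_222} branch of Algorithm \ref{alg:cap}). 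The only distributions compatible with general position are $(3,2,1)$ and $(2,2,2)$, and the paper enumerates both. Omitting $(2,2,2)$ would be legitimate only if every equipartition admitted some $(3,2,1)$ certificate; the paper explicitly presents this as an unproven empirical observation (it is the basis of the conjectured $O(n^{9/2}\log n)$ speed-up), not a theorem, so your algorithm cannot be certified to find \emph{all} equipartitions. Separately, your completeness step (``the closure of a bounded cell contains a $0$-cell'') needs more care than a ``standard arrangement-theoretic argument'': the walls $\intprod{p,u_i}=c_i$ are curved hypersurfaces over a parameter space with compact factors, so a certifying cell could a priori close up in the rotation directions without ever reaching a vertex; this is the content of the paper's Lemma \ref{lem:6}, which must be proved by a perturbation argument rather than quoted from the theory of hyperplane arrangements.
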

	There is a previous algorithm, described in \cite{Dia1991}, that runs in time $O(n^{15/2}\log(n))$.
	We believe the time complexity in our algorithm can be further reduced to $O(n^5\log(n))$. For the $2$-dimensional version of this problems there is an algorithm that finds all equipartitions in time $O(n\log(n))$ \cite{RS2007}.
	
	A surprising thing that we found is almost all random sets of points we explored have orthogonal equipartitions defined by three planes. We considered sets of either $8$ or $16$ random points chosen uniformly and independently in a cube. Of these only a fraction of around $0.0009$ sets of $8$ points and of $0.0007$ sets of $16$ points failed to have such an equipartition. This came as a surprise to us in view of the topological intuition described above.
	
	There has been recent interest in equipartitions of point sets. In \cite{ABTRW2024}, it is shown that any set of points in $R^3$ admits an equipartition (under a slightly different definition) by $3$ planes, such that the intersection of two of the planes is a line with a prescribed direction. Moreover, they present an efficient algorithm that computes such an equipartition for a set of $n$ points, given the prescribed direction. This is achieved in time $O^*(n^{5/2})$, where the $O^*$-notation hides polylogarithmic factors.
	
	\section{Orthogonal hyperplanes and the moment curve}\label{sec:pre}
	
	The standard moment curve in $\R^d$ is defined parametrically by the mapping
	\[\gamma(t)=\left(t,t^2,\dots,t^d\right)\]
	for $t\in\R$.
	
	In order to prove Theorem \ref{thm:measure} we require a few lemmas. The first one is probably well-known but we include a short proof. In simple words it states that not all elements in an orthonormal basis can be too far from any given direction.
	
	\begin{lemma}\label{lem:raiz_3}
		Let $\{u_1,\dots,u_d\}$ be an orthonormal basis of $\R^d$ and let $v\in\R^d$ be a unit vector. Then there is an $i\in\{1,\dots,d\}$ such that $\abs{\intprod{u_i,v}}\ge 1/\sqrt{d}$.
	\end{lemma}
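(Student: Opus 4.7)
The plan is to use the fact that, since $\{u_1,\dots,u_d\}$ is an orthonormal basis, any vector admits a Parseval-type expansion in terms of its inner products with the basis. Concretely, I would first write
\[
v = \sum_{i=1}^d \intprod{u_i,v}\, u_i,
\]
and then take norms squared to obtain
\[
1 = \|v\|^2 = \sum_{i=1}^d \intprod{u_i,v}^2.
\]

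From here the argument is a one-line pigeonhole: if all $d$ summands were strictly less than $1/d$, the total sum would be strictly less than $1$, contradicting the displayed identity. Hence at least one index $i$ must satisfy $\intprod{u_i,v}^2 \ge 1/d$, which rearranges to $\abs{\intprod{u_i,v}} \ge 1/\sqrt{d}$, as required. Equivalently, one may invoke that the maximum of a finite set of nonnegative reals is at least their average.

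I do not expect any real obstacle here; the only thing to be careful about is making the inequality non-strict (so the $\ge$ in the statement is justified) by negating the strict inequality \emph{all} $\abs{\intprod{u_i,v}} < 1/\sqrt{d}$, and to state explicitly why the expansion $v=\sum_i \intprod{u_i,v} u_i$ is valid, namely orthonormality of the basis. The constant $1/\sqrt{d}$ is sharp, attained for instance when $v$ is the normalized all-ones vector in the basis $\{u_i\}$, so no improvement is possible and the lemma is best possible in the stated form.
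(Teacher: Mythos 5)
Your proof is correct, but it takes a genuinely different route from the paper. You use the algebraic identity $1=\|v\|^2=\sum_{i=1}^d \intprod{u_i,v}^2$ (Parseval, valid precisely because $\{u_1,\dots,u_d\}$ is orthonormal) and then a pigeonhole/averaging step: some summand must be at least the average $1/d$, giving $\abs{\intprod{u_i,v}}\ge 1/\sqrt d$. The paper instead argues geometrically: it considers the hypercube whose faces are orthogonal to the $u_i$, takes the diagonal direction $w=(u_1+\dots+u_d)/\sqrt d$, observes that the line spanned by $v$ must exit through a pair of opposite faces orthogonal to some $u_i$, and compares $\abs{\intprod{u_i,v}}$ with $\abs{\intprod{u_i,w}}=1/\sqrt d$. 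Your argument buys rigor and brevity at no cost: every step is a linear-algebra identity, there is no geometric claim (such as the line crossing a particular pair of faces) left to the reader to verify, and it immediately yields the sharpness example $v=(u_1+\dots+u_d)/\sqrt d$ that you note. The paper's argument buys geometric intuition --- it makes visible \emph{why} the extremal direction is the cube diagonal --- which matches the informal description preceding the lemma (``not all elements in an orthonormal basis can be too far from any given direction''), but it is less self-contained as written. Either proof is acceptable; yours is the more standard and the more easily checked.
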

	\begin{proof}
		Since $\{u_1,\dots,u_d\}$ is an orthonormal basis we have that $v=\intprod{u_1,v} u_1 + \dots + \intprod{u_d,v} u_d$, so $1=\intprod{v,v}=\intprod{u_1,v}^2+\dots+\intprod{u_d,v}^2$. Let $i\in\{1,\dots,d\}$ be the index for which $\intprod{u_i,v}^2$ is maximal, then $d \intprod{u_i,v}^2\ge \intprod{u_1,v}^2+\dots+\intprod{u_d,v}^2 = 1$ which implies that $\abs{\intprod{u_1,v}}\ge 1/\sqrt{d}$.
	\end{proof}
	
	For large $M$, the part of the moment curve with $t\ge M$ behaves like a vertical line since its last coordinate dominates the rest. This is exploited in the final lemma. 
	
	\begin{lemma}\label{lem:tail}
		For every $d\ge 2$ there is a number $M_d>0$ such that the following holds: If $H_1,H_2,\dots, H_d$ are mutually orthogonal hyperplanes in $\R^d$, then the part of the moment curve defined by
		\[\Gamma=\{\gamma(t):t\ge M_d\}\]
		intersects some $H_i$ in at most one point.
	\end{lemma}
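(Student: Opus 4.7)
The plan is to exploit the polynomial structure of the intersections between the moment curve and a hyperplane together with Lemma \ref{lem:raiz_3}.

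Let $u_i$ be a unit normal to $H_i$, so that $H_i = \{x : \intprod{u_i,x} = c_i\}$ for some $c_i\in\R$. Because the $H_i$ are mutually orthogonal, the vectors $u_1,\dots,u_d$ form an orthonormal basis of $\R^d$. The intersection $\gamma(t)\in H_i$ is governed by the polynomial
\[
p_i(t) \;=\; \intprod{u_i,\gamma(t)} - c_i \;=\; u_i^{(1)} t + u_i^{(2)} t^2 + \dots + u_i^{(d)} t^d - c_i,
\]
where $u_i^{(k)}$ denotes the $k$-th coordinate of $u_i$. The key observation is that if the leading coefficient $u_i^{(d)}$ is not too small, then $p_i$ is monotone on a half-line $[M_d,\infty)$, and hence has at most one zero there.

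To pick a good index $i$, I would apply Lemma \ref{lem:raiz_3} with $v = e_d$, the last standard basis vector. This yields some $i\in\{1,\dots,d\}$ with $\abs{u_i^{(d)}} \ge 1/\sqrt{d}$. For this $i$, I would estimate the derivative
\[
p_i'(t) \;=\; \sum_{k=1}^{d} k\, u_i^{(k)}\, t^{k-1},
\]
using $\abs{u_i^{(k)}}\le 1$ for every $k$ (since $u_i$ is a unit vector) and the bound $\abs{u_i^{(d)}}\ge 1/\sqrt{d}$. For $t>0$ the leading term contributes at least $\sqrt{d}\, t^{d-1}$ in absolute value, while the remaining terms are bounded above by $\sum_{k=1}^{d-1} k\, t^{k-1}$, a polynomial of degree $d-2$. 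Hence there is a constant $M_d>0$, depending only on $d$, such that for all $t\ge M_d$ the leading term dominates and $p_i'(t)$ has constant sign (the sign of $u_i^{(d)}$). Consequently $p_i$ is strictly monotone on $[M_d,\infty)$, so it has at most one root in that interval, which is exactly the conclusion that $H_i$ meets $\Gamma$ in at most one point.

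There is no real obstacle here; the only mildly delicate step is making sure that the choice of $M_d$ is uniform over all admissible configurations, but this follows because the estimates for $p_i'$ use only the universal bounds $\abs{u_i^{(k)}}\le 1$ and $\abs{u_i^{(d)}}\ge 1/\sqrt{d}$ provided by Lemma \ref{lem:raiz_3}, neither of which depends on the particular orthonormal basis $\{u_1,\dots,u_d\}$ or on the offsets $c_i$.
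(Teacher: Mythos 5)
Your proof is correct and follows essentially the same approach as the paper: both apply Lemma \ref{lem:raiz_3} with $v=e_d$ to single out a hyperplane whose unit normal has last coordinate at least $1/\sqrt d$ in absolute value, and both then use the fact that this coordinate makes the top-degree term in $t$ dominate once $t\ge M_d$. The only difference is cosmetic and lies in the finish: the paper argues by contradiction, using that the chord direction $(1,S_1,\dots,S_{d-1})$ through two putative intersection points $\gamma(a),\gamma(b)$ would have to be orthogonal to $u_1$, whereas you show directly that $p_i(t)=\intprod{u_i,\gamma(t)}-c_i$ is strictly monotone on $[M_d,\infty)$ by bounding $p_i'$ --- a Rolle-type reformulation of the same estimate (the paper's divided difference is the discrete analogue of your derivative).
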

	\begin{proof}
		Let $u_i$ be the vector normal to $H_i$ with non-negative last coordinate. Let $e_d=(0,\dots,0,1)\in \R^d$, then Lemma \ref{lem:raiz_3} implies that one of the $u_i$, say $u_1$, satisfies
		\[\intprod{u_1,e_d}\ge \frac{1}{\sqrt d}.\]
		
		Assume that $H_1$ intersects $\Gamma$ at two distinct points $\gamma(a)$ and $\gamma(b)$ with $0<a,b$.
		Then the vector $v = \frac 1{b-a}(\gamma(b) - \gamma(a))$ is parallel to $H_1$ and therefore $\intprod{u_1,v}=0$.
		Note that
		\[v = \frac{\gamma(b)-\gamma(a)}{b-a} = (1,S_1,S_2,\dots,S_{d-1}),\]
		where $S_n = a^n+a^{n-1}b+\dots+ab^{n-1}+b^n$.
		
		If $u_1=(x_1,\dots,x_d)$ then $x_d=\intprod{u_1,e_d}\ge 1/\sqrt d$ and therefore
		\begin{align*}
			0 & = \intprod{u_1,v} = x_1 + x_2 S_1+\dots + x_d S_{d-1}\\
			& > x_1 - S_1-\dots - S_{d-2} + \frac{S_{d-1}}{\sqrt{d}},
		\end{align*}
		so
		\begin{equation}\label{eq:}
			x_1 < S_1 + \dots + S_{d-2} - \frac{S_{d-1}}{\sqrt{d}}.
		\end{equation}
		The right-hand side of this inequality is a polynomial on $a$ and $b$, and satisfies that the coefficients multiplying the terms of maximal degree are negative. Therefore, if $M_d$ is large enough and $M_d\le a,b$, then \eqref{eq:} implies $x_1<-1$ which contradicts $\|u_1\|=1$.
	\end{proof}
	
	For $d=3$ it is not hard to see that we may choose $M_3<2$. On the other hand, there does exist a triple of mutually orthogonal planes such that each of them intersects $\{\gamma(t):t\ge 0\}$ in at least $2$ points.
	
	Now we are ready to prove out main theorem.
	
	\begin{proof}[Proof of Theorem \ref{thm:measure}]
		Let $M_d$ and $\Gamma$ be as in Lemma \ref{lem:tail} and let $\mu$ be any mass supported on $\Gamma$.
		Assume that $\mu$ has an orthogonal equipartition defined by the planes $H_1,\dots, H_d$, then there must be a segment of positive length of $\Gamma$ in each of the $2^d$ connected components of $\R^d\setminus(H_1\cup\dots\cup H_d)$. This implies that $\Gamma$ must intersect $H_1\cup\dots\cup H_d$ at least $2^d-1$ times.
		
		By Lemma \ref{lem:tail} there is a plane, say $H_1$, which $\Gamma$ intersects only once. Since $\Gamma$ is a subset of the moment curve, $\Gamma$ intersects every $H_i$, with $i>1$ in at most $d$ points. Therefore, the number of intersection points between $\Gamma$ and $H_1\cup\dots\cup H_d$ is at most $1+d(d-1)$ which, for $d\ge 4$, is less than $2^d-1$. This proves that no collection of $d$ mutually orthogonal hyperplanes defines an equipartition of $\mu$ when $d\ge 4$.
		
		For $d=3$ we have that $d(d-1)+1 = 7 = 2^d-1$ so there is more to be done. Below we show that in this case there is a second plane which intersects $\Gamma$ in at most $2$ points.
		
		Assume that a plane $H$ with normal vector $u$ intersects $\Gamma$ at three points $\gamma(a)$, $\gamma(b)$ and $\gamma(c)$ with $0 < a, b, c$.
		Then the vector $v = \frac{1}{(a-b)(a-c)(b-c)}(\gamma(b)-\gamma(a)) \times (\gamma(c)-\gamma(a))$ is orthogonal to $H$ and therefore parallel to $u$.
		Note that
		\[v = (1,-a-b-c,ab+ac+bc).\]
		Let $y=a+b+c$ and $z=ab+ac+bc$, then clearly $0<y,z$.
		
		If $\Gamma$ intersects $H_2$ and $H_3$ in at least $3$ points each, their normal vectors $u_2$ and $u_3$ are parallel to vectors of the form $(1,-y_2,z_2)$ and $(1,-y_3,z_3)$ with $0<y_2,z_2,y_3,z_3$.
		Since $H_2$ and $H_3$ are orthogonal,
		\[0=\intprod{(1,-y_2,z_2),(1,-y_3,z_3)}=1+y_2y_3+z_2z_3 > 1,\]
		which is a contradiction.
		Thus, either $H_2$ or $H_3$ intersects $\Gamma$ in at most $2$ points.
		Therefore, the number of intersection points between $\Gamma$ and $H_1,H_2,H_3$ is at most $6$, which is less than $2^3-1$, proving that no collection of $3$ mutually orthogonal planes defines an equipartition of $\mu$ when $d=3$.
	\end{proof}
	
	We showed that, for $d=2$, $\Gamma$ intersects any pair of orthogonal lines in at most $3$ points. For $d=3$, $\Gamma$ intersects any triple of mutually orthogonal planes in at most $6$ points. It might be possible that, for arbitrary $d$, $\Gamma$ intersects any $k$-tuple of hyperplanes in at most
	\begin{equation}\label{eq:numpoints}
		d+(d-1)+\dots+(d-k+1)=\frac {k(2d-k+1)}{2}
	\end{equation}
	points, however we have not attempted to verify this.
	
	The Grünbaum-Hadwiger-Ramos problem asks to determine the smallest number $d=\Delta(j,k)$ such that for any $j$ masses in $\R^d$, there exist $k$ hyperplanes that simultaneously equipartition each of the masses. A natural variant of this problem introduces the additional constraint that the $k$ hyperplanes must be mutually orthogonal, we denote the corresponding number by $\Delta^\perp(j,k)$.
	
	Similar to the argument in \cite{Ram96}, if we place $j$ masses supported on $\Gamma$ with mutually disjoint supports, then a simultaneous equipartition of the $j$ masses by $k$ hyperplanes would need to cut the support of each mass in at least $2^k-1$ points, for a total of $j(2^k-1)$ points. On the other hand, by Lemma \ref{lem:tail}, the $k$ hyperplanes intersect $\Gamma$ in at most $(k-1)d+1$ points. It follows that $(k-1)d+1\ge j(2^k-1)$ and therefore
	\begin{equation*}
		\Delta^\perp(j,k) \ge \frac{j (2^k - 1)-1}{k-1}.
	\end{equation*}
	However, this bound can be improved in some cases, as was done in the proof of Theorem \ref{thm:measure} for $j=1$ and $k=3$. If \eqref{eq:numpoints} holds, we would have $k(2d-k+1)/2\ge j(2^k-1)$ and thus
	\begin{equation*}
		\Delta^\perp(j,k) \ge \frac{j (2^k - 1)}{k}+\frac{k-1}2.
	\end{equation*}
	
	\section{Discrete partition problem}\label{sec:algo}
	
	In this section we study a discrete version of Question \ref{prob:measure} where we consider point-sets instead of measures.
    
    We say that a set of points in $\R^3$ is in \emph{general position} if, from $X$, no $3$ points are colinear, no $4$ points are coplanar, no $6$ points are in the union of $2$ orthogonal planes, and no $7$ points are in the union of $3$ orthogonal planes. Note that this is what is expected for sets of points generated randomly, so this can be achieved by perturbing any given set of points. It is easy to see that, if $\abs{X}\ge 7$, it is enough to ask for the condition that no $7$ points are in the union of $3$ orthogonal planes in order to satisfy the rest.
    
    Recall that an oriented plane $H\subset\R^3$ divides $\R^3$ into two open half-spaces called the positive and negative sides, we denote them by $H^+$ and $H^-$, respectively.
    Suppose we have a triple of intersecting oriented planes $H_1,H_2,H_3$, then for any choice of signs $s_1,s_2,s_3\in\{+,-\}$ we have a closed region defined by
    \begin{equation}\label{eq:regions}
    	\bigcap_{i\in\{1,2,3\}} \overline{H_i^{s_i}}.
    \end{equation}
    Assume that $X$ is a set with $8n$ points in $\R^3$, and the sets $X_1,\dots,X_8$ form a partition of $X$.
    We say that this partition is an \emph{orthogonal equipartition} of $X$ if each $X_i$ has $n$ points and there exist three orthogonal oriented planes $H_1,H_2,H_3$ together with bijections $X_i\mapsto R_i$ which assigns to each $X_i$ one of the $8$ regions $R_i$ defined in \eqref{eq:regions} such that $X_i\subset R_i$. In this case we say that the planes $H_1,H_2,H_3$ \emph{certify} the equipartition.
    
    Our algorithm is based on the following key observation that has a standard and straightforward proof.
    \begin{lemma}\label{lem:6}
	    If $X$ has an orthogonal equipartition certified by the planes $H_1,H_2,H_3$, then there are planes $H_1',H_2',H_3'$ that also certify this partition and whose union contains exactly $6$ points from $X$.	
    \end{lemma}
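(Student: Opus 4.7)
The plan is a dimension-count in the $6$-dimensional space of triples of mutually orthogonal oriented planes in $\R^3$. I would parametrize such a triple as an orthonormal frame $(u_1,u_2,u_3)\in SO(3)$ (three parameters) together with offsets $c_1,c_2,c_3\in\R$, with $H_i=\{x\in\R^3 : \intprod{u_i,x} = c_i\}$ (three more). Let $\mathcal{O}$ denote the set of triples that certify the given equipartition. The defining condition $X_i\subset\overline{R_i}$ is closed, and since each region $R_i$ must contain $n\ge 1$ points of $X$, the common intersection point of the three planes is forced into a bounded neighborhood of $X$; hence $\mathcal{O}$ is compact.

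I would next consider the function $f:\mathcal{O}\to\{0,1,\dots,6\}$ that counts the points of $X$ lying on $H_1\cup H_2\cup H_3$, where the upper bound $6$ is supplied by the general position hypothesis that no $7$ points of $X$ lie in a union of $3$ orthogonal planes. The goal is to show $\max_{\mathcal{O}} f = 6$, by contradiction. Pick $T^*\in\mathcal{O}$ maximizing $f$ with value $k$, and suppose $k<6$. The $k$ incidences ``$p\in H_i$'' present at $T^*$ impose $k$ affine codimension-$1$ constraints on the $6$-dimensional parameter space, and general position makes them independent, so they cut out a submanifold $V$ of dimension $6-k\ge 1$ through $T^*$. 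Let $\mathcal{S}=V\cap\mathcal{O}$: it is closed and bounded, hence compact, and it contains $T^*$. Because $V$ is an unbounded affine subspace while $\mathcal{S}$ is bounded inside $V$, $\mathcal{S}$ has a non-empty boundary in $V$; at any such boundary point, a previously-non-incident point of $X$ must lie on one of the planes, producing a triple with $f>k$ and contradicting maximality. Hence $\max f = 6$, and any maximizer serves as the desired $(H_1',H_2',H_3')$.

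The main subtlety is verifying the transversality claims in the dimension count—namely, that the $k$ current incidences really are independent (so that $V$ has the claimed dimension), that boundary points of $\mathcal{S}$ in $V$ genuinely correspond to new incidences rather than to some other degeneracy such as a pair of incidences colliding, and that compactness prevents the deformation from escaping. All three concerns are handled by the general position hypotheses (no $4$ points coplanar, no $6$ points in the union of $2$ orthogonal planes, no $7$ points in the union of $3$ orthogonal planes), which force incidences to appear one at a time, together with the compactness of $\mathcal{O}$ established above. Since the resulting maximizer has $f=6$, the union of its three planes contains exactly $6$ points of $X$, as required.
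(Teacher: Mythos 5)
Your overall architecture is reasonable: $\mathcal{O}$ is indeed compact, the incidence-maximizing triple exists trivially (the count is integer-valued and bounded by $6$ thanks to general position), and your final step is correct --- since certification by \emph{closed} regions is a closed condition, a boundary point of $\mathcal{S}$ in $V$ must carry a new incidence while still certifying, contradicting maximality. The genuine gap is the reason you give for $\mathcal{S}$ having a boundary point at all. Your parameter space is $SO(3)\times\R^3$, not $\R^6$: the constraints $\intprod{u_i,p}=c_i$ are not affine in any global sense, and $V$ is not an affine subspace. Worse, $V$ is typically \emph{compact}: as soon as each of the three planes contains at least one incident point, the offsets are determined on $V$ by the frame via $c_i=\intprod{u_i,p_i}$, so $V$ is homeomorphic to a closed subset of the compact group $SO(3)$. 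This is exactly the situation in the essential cases (for instance $k=5$ with incidence distribution $(2,2,1)$, which must be handled to reach the terminal configurations $(3,2,1)$ or $(2,2,2)$). There the dichotomy ``$\mathcal{S}$ bounded, $V$ unbounded'' is unavailable, and nothing you say rules out that $\mathcal{S}$ is an entire compact connected component of $V$, with empty boundary in $V$ and no new incidence forced. The unboundedness argument only ever works in the easy case where some plane contains no point of $X$ (and there a pure translation of that plane already does the job); it fails in all the cases where the work actually lies.

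The standard mechanism that forces the new incidence is not unboundedness but an intermediate-value (normal-flipping) argument: inside $V$ one can run a path of frames along which some normal is reversed while each plane pivots about its incident points --- e.g.\ with distribution $(2,1,1)$, rotating the frame about the axis normal to the doubly-incident plane by an angle $\pi$ fixes that plane and returns the other two planes to themselves with normals $u_i$ replaced by $-u_i$. For any non-incident point $x$, the signed quantity $\intprod{u_i(\theta),x-p_i}$ then changes sign between $\theta=0$ and $\theta=\pi$, so $x$ lies on a plane at some intermediate angle; stopping at the first such angle preserves certification and raises the incidence count. (The choice of path depends on how the incidences are distributed among the planes, e.g.\ for $(2,2,1)$ one lets $u_1$ traverse half of the circle orthogonal to the chord on $H_1$ and carries $u_2$ along as $u_1\times d_2$ normalized.) A secondary issue: your appeal to general position for the ``independence'' of the $k$ incidence constraints is not actually covered by the paper's hypotheses --- dependences correspond to metric degeneracies (such as the chord of $H_1$ being orthogonal to $H_2$ and vice versa) which ``no $7$ points on three orthogonal planes'' does not exclude; the flip argument conveniently needs no such transversality, only the ability to move within $V$.
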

	
	In the opposite direction, it is possible to show that one may always perturb the planes in order to obtain a triple of mutually orthogonal planes containing no points from $X$ which certify the same equipartition.
	
	Assume that $\abs{H_1\cap X}\ge \abs{H_2\cap X}\ge \abs{H_3\cap X}$, then there are only two possibilities: either $\abs{H_1\cap X}=3$, $\abs{H_2\cap X}=2$ and $\abs{H_3\cap X}=1$, or $\abs{H_1\cap X}=\abs{H_2\cap X}=\abs{H_3\cap X}=2$.
	
   	Let $A_1, A_2, A_3\subset\R^3$ be finite sets such that $A_1\cup A_2\cup A_3$ is in general position. If $\abs{A_1}=3$, $\abs{A_2}=2$ and $\abs{A_1}=1$ then it is easy to see that there is a unique triple of orthogonal planes $H_1$, $H_2$ and $H_3$ containing $A_1$, $A_2$ and $A_3$, respectively. Computing these planes is also straightforward. If $A_1=A_2=A_3=2$ then there is at most one triple of orthogonal planes $H_1$, $H_2$ and $H_3$ containing $A_1$, $A_2$ and $A_3$, respectively. This triple can be computed in the following way.

	First, for each $i\in\{1,2,3\}$, consider the vectors $v_i$ which are the difference between the two points in $A_i$. The planes $H_1,H_2,H_3$ should be mutually orthogonal and satisfy that $v_i\in H_i$. If $u_i$ is a unit normal vector to $H_i$ then this is equivalent to $u_1,u_2,u_3$ being an orthonormal basis satisfying $u_i \perp v_i$.
    	
    Choose $a,b$ as two independent vectors, whose linear span does not contain any of the $v_i$. Take a basis of $v_i^\perp$, for example $a_{i} = a\times v_i$ and $b_{i} = b\times v_i$. In order for $u_i\perp v_i$, each $u_i$ should then be a linear combination of $a_i$ and $b_i$. To simplify, assume that the coefficient of $b_i$ is $1$ and let
    \[u_i = \alpha_i a_i + b_i.\]
    For these vectors to be orthogonal we require, for each $i\in\{1,2,3\}$,
	\begin{align}
		0=u_i\cdot u_{i+1}&=\alpha_i\alpha_{i+1} (a_i\cdot a_{i+1}) +\alpha_i (a_i\cdot b_{i+1})+\alpha_{i+1} (b_i\cdot a_{i+1}) + (b_i\cdot a_{i+1}) \label{eq:linear}\\
		&=A_i \alpha_i\alpha_{i+1} +B_i\alpha_i+C_i\alpha_{i+1} + D_i, \nonumber
	\end{align}
    where the indices are taken modulo $3$ and
   	\begin{align*}
   		A_i = a_i\cdot a_{i+1}, \qquad B_i = a_i\cdot b_{i+1}, \qquad C_i = b_i\cdot a_{i+1}, \qquad D_i = b_i\cdot a_{i+1},
   	\end{align*}
    This is a system of three quadratic equations whose solutions are given by
    \begin{equation}\label{eq:alpha}
    	\alpha_i = \frac{r_i\pm\sqrt{q}}{2s_i},
    \end{equation}
    where
    \begin{align*}
   		q & = (A_1 B_3 D_2-A_1 C_2 D_3+A_2 B_1 D_3+A_2 C_3 D_1\\
   		&\quad\qquad\qquad -A_3 B_2 D_1+A_3 C_1 D_2-B_1 B_2 B_3-C_1 C_2 C_3)^2 \\
   		&\quad +4 (A_1 A_3 D_2-A_1 C_2 C_3+A_2 B_1 C_3-A_3 B_1 B_2) \\
   		&\quad\qquad\qquad  (-A_2 D_1 D_3+B_2 B_3 D_1-B_3 C_1 D_2+C_1 C_2 D_3), \\
   		r_i & = A_i B_{i+2} D_{i+1}-A_i C_{i+1} D_{i+2}+A_{i+1} B_i D_{i+2}+A_{i+1} C_{i+2} D_i\\
   		&\quad-A_{i+2} B_{i+1} D_i+A_{i+2} C_i D_{i+1}-B_i B_{i+1} B_{i+2}-C_i C_{i+1} C_{i+2}, \\
   		s_i & = - A_i A_{i+2} D_{i+1} + A_i C_{i+1} C_{i+2} - A_{i+1} B_i C_{i+2} + A_{i+2} B_i B_{i+1}).
   	\end{align*}
   	If $q<0$, there is no real solution and therefore there does not exist a triple of planes $H_1,H_2,H_3$ satisfying the desired properties.
   	If $s_i=0$, we may not assume that $u_i$ is of the form $\alpha_ia_i+b_i$.
   	In other words, any solution would have $u_i$ parallel to $a_i$.
   	In this case we may assume that $u_i=a_i$ in order to construct and solve a system of equations similar to \eqref{eq:linear}.
   	
   	Once we have found the vectors $u_1,u_2,u_3$ one only needs to construct the planes $u_1^\perp,u_2^\perp,u_3^\perp$ and translate them appropriately in order th determine $H_1,H_2,H_3$.
   	Because of the choice of the sign in \ref{eq:alpha}, there are $8$ possible real solutions, these correspond to triples of vectors of the form $\pm u_1,\pm u_2,\pm u_3$ which all give rise to the same triple of planes.
   	
   	For our computational goals it is important to notice that the value of $q$ in \eqref{eq:alpha} does not depend on the index, so the parameters defining the planes $H_1,H_2,H_3$ have coefficients in $\mathbb Q[\sqrt{q}]$. This means that, if $q\ge 0$, we can determine precisely whether any given point lies on the positive or negative side of a plane. 

    \begin{algorithm}
    	\caption{Finding an orthogonal equipartition in dimension $3$.}\label{alg:cap}
    	\begin{algorithmic}
    		\Require A set $X\subset\R^3$ of $8n$ points with integer coordinates in general position
    		\Ensure The set $E$ of orthogonal equipartitions of $X$
    		\State $E \gets \emptyset$
    		\ForAll{$A_1\in \binom{X}{3}$, $A_2\in \binom{X}{2}$, $A_3\in \binom{X}{1}$}
	    		\State $H_1,H_2,H_3\gets$ \Call{planes\_321}{$A_1$,$A_2$,$A_3$}
	    		\ForAll{$p\in $ \Call{partitions}{$X,H_1,H_2,H_3$}}
		    		\If{$p$ defines an equipartition}
			    		\State Add $p$ to $E$
		    		\EndIf
	    		\EndFor
    		\EndFor
    		\ForAll{distinct $A_1\in \binom{X}{2}$, $A_2\in \binom{X}{2}$, $A_3\in \binom{X}{2}$}
	    		\State $H_1,H_2,H_3\gets$ \Call{planes\_222}{$A_1$,$A_2$,$A_3$}
	    		\If{$H_1$,$H_2$,$H_3$ are valid}
		    		\ForAll{$p\in $ \Call{partitions}{$X,H_1,H_2,H_3$}}
			    		\If{$p$ is an equipartition}
				    		\State Add $p$ to $E$
			    		\EndIf
		    		\EndFor
	    		\EndIf
    		\EndFor
    	\end{algorithmic}
    \end{algorithm}
    
	Now we are ready to describe Algorithm \ref{alg:cap}. It receives as input a set of $8n$ points in general position.
	First it looks at all possible triples $A_1,A_2,A_3\subset X$ with $\abs{A_1}=3$, $\abs{A_2}=2$ and $\abs{A_3}=1$ and uses the function \textsc{planes\_321} to construct the triple of orthogonal planes $H_1,H_2,H_3$ satisfying $H_1\cap X=A_1$, $H_2\cap X=A_2$ and $H_3\cap X=A_1$. The function \textsc{PARTITIONS} then constructs all possible partitions of $X$ certified by these planes by assigning the possible regions to each to the $6$ points from $X$ in $H_1\cup H_2\cup H_3$. If all parts have the same size, the partition is added to the list of orthogonal equipartitions.
	
	Afterwards it looks at the triples $A_1,A_2,A_3\subset X$ with $\abs{A_1}=\abs{A_2}=\abs{A_3}=2$ and uses the function \textsc{PLANES\_222} to construct, whenever they exist, the triple of orthogonal planes $H_1,H_2,H_3$ satisfying $H_1\cap X=A_1$, $H_2\cap X=A_2$ and $H_3\cap X=A_1$. If the planes do exist then the algorithm proceeds as above.
	
	We are left with the set of all possible orthogonal equipartition of $X$.

	Our algorithm runs in time $O(n^7)$ since it takes $O(n^6)$ time to iterate over the sets $A_1,A_2,A_3$ and for each of these it requiers another $O(n)$ to check if the partition is an equipartition.
	We believe that this can be improved to $O(n^{9/2} \log(n))$ since, in all sets $X$ of points we tested, whenever $X$ had an equipartition it could be certified with planes satisfying $\abs{H_1\cap X}=3$, $\abs{H_2\cap X}=2$ and $\abs{H_3\cap X}=1$.
	If only these planes were needed to verify that a set $X$ has no orthogonal equipartition then we could iterate only over the sets $A_1\subset X$ with $\abs{A_1}=3$ to find the planes $H_1\supset A_1$ which halve $X$ (there are at most $n^{5/2}$ \cite{SST2001}), project $X$ onto $H_1$ and solve the $2$-dimensional version of the problem on $H_1$ (time $O(n\log(n))$ \cite{RS2007}) which produces planes $H_2,H_3$ orthogonal and orthogonal to $H_3$. This is almost, but not quite, an equipartition, so we can check in time $O(n)$ if it is.
	
	A version of our algorithm has been implemented using SageMath \cite{sagemath}.The reader can find it in \href{https://github.com/XGEu2X/Grunbaum-3d-Partitions}{https://github.com/XGEu2X/Grunbaum-3d-Partitions}.
	The repository includes instructions on how to use it, our experiments and a couple of examples of point sets which both possess and lack orthogonal equipartitions.
   
	\section{Acknowledgments}
	We extend our heartfelt gratitude to Jeffrey Uhlmann for his invaluable discussions and for pointing out the wider interest this problem holds. We also thank the anonymous referees, whose insightful suggestions helped improve the paper, especially the proof of Lemma \ref{lem:raiz_3}.
	
	This work was supported by UNAM-PAPIIT project IN111923.
	
	\bibliographystyle{amsalpha}
	\bibliography{main}
\end{document}